%
%
%
\documentclass{amsproc}
\usepackage{epsfig,amscd,amssymb,amsmath,amsfonts}
\usepackage[margin=1.05in]{geometry}
\usepackage[colorlinks]{hyperref}
\usepackage{enumitem}
\usepackage{xcolor}
\newtheorem{theorem}{Theorem}[section]
\newtheorem{lemma}[theorem]{Lemma}
\newtheorem{proposition}{Proposition}[section]
\theoremstyle{definition}
\newtheorem{definition}[theorem]{Definition}

\newtheorem{example}[theorem]{Example}

\theoremstyle{remark}
\newtheorem{remark}[theorem]{Remark}

\numberwithin{equation}{section}



\begin{document}

\title{A Simplicial Construction for Noncommutative Settings}

\author{Samuel Carolus}
\address{Department of Mathematics and Statistics, Bowling Green State University, Bowling Green, OH 43403}
\email{carolus@bgsu.edu}

\author{Jacob Laubacher }
\address{
Department of Mathematics, St. Norbert College, De Pere, WI 54115 }
\email{jacob.laubacher@snc.edu}

\author{Mihai D. Staic}
\address{Department of Mathematics and Statistics, Bowling Green State University, Bowling Green, OH 43403 }
\address{Institute of Mathematics of the Romanian Academy, PO.BOX 1-764, RO-70700 Bu\-cha\-rest, Romania.}
\email{mstaic@bgsu.edu}



\subjclass[2010]{Primary  16E40, Secondary 	18G30}


\keywords{Higher order Hochschild homology.}

\begin{abstract}
In this paper we present a general construction that can be used to define the higher Hochschild homology for a noncommutative algebra. We also discuss other examples where this construction can be used. 
\end{abstract}

\maketitle

 \section{Introduction} Higher order Hochschild homology, $H^{{\mathbf X_{\bullet}}}_n(A,M)$, was introduced by Pirashvili in \cite{p}. It is associated  to a commutative $k$-algebra $A$, a symmetric bimodule $M$, and a simplicial set ${\mathbf X_{\bullet}}$.  When the simplicial set ${\mathbf X_{\bullet}}$ models $S^1$ with the usual simplicial structure, one recovers the usual Hochschild homology. The cohomology version of this construction was introduced by Ginot in \cite{gi}. 

Secondary (co)homology of a triple $(A,B,\varepsilon)$ was introduced in \cite{sta}. In order for the construction to work we must have that  the morphism $\varepsilon:B\to A$ gives a $B$-algebra structure on $A$, and in particular $B$ must be commutative.  

As noted above, higher order Hochschild  (co)homology is defined only for commutative $k$-algebras, while Hochschild (co)homology is defined for any $k$-algebra.  The problem comes from the fact that for a general simplicial set $({\mathbf X_{\bullet}},d_i,s_i)$ we do not have a natural order on the fibers of the maps $d_i$. This means that there is a choice to be made when we define the pre-simplicial $k$-module corresponding to higher order Hochschild (co)homology. 
One possible approach for this problem is to restrict ourselves to those simplicial sets that do have a natural order on the fibers of $d_i$. However this approach does not provide a lot of new examples since any such simplicial set must be of dimension one (see \cite{b}). 

A similar problem appears when we want to define the secondary (co)homology of a triple $(A,B,\varepsilon)$, and the $k$-algebra $B$ is not commutative. There is a choice to be made when one wants to write the formulas for the simplicial maps, and none of those choices give a simplicial module (unless $B$ is commutative).

In this paper we present a construction that allows us to extend several homological constructions  to noncommutative settings. For this we use the simplicial nature of the higher order Hochschild  (co)homology. 
First, we show that to a so called $\mathbf{\Lambda}$-system we can associate a unique maximal pre-simplicial module. Then we construct several natural examples of $\mathbf{\Lambda}$-systems.  In particular, we associate one such $\mathbf{\Lambda}$-system to a simplicial set $\mathbf{X_{\bullet}}$, a $k$-algebra $A$ and an $A$-bimodule $M$. Then we consider the associated pre-simplicial module and take its homology.  
When $A$ is commutative and $M$ is $A$-symmetric we recover the usual higher Hochschild homology $H_n^{{\mathbf X_{\bullet}}}(A,M)$. Our construction can also be used to define a secondary homology in the noncommutative setting.  

We discuss in detail the case when ${\mathbf X}_\bullet$ is modeled by $S^1$. We show that if $A$ is a commutative $k$-algebra and $M$ is a symmetric $A$-bimodule, then $H^{S^1}_n(A,M)\simeq H^{S^1}_n(M_l(A),M_l(M))$, and therefore we have  Morita invariance for this case. In the last section we give an account of other related problems and some open questions.

\section{Preliminaries} 

In this paper $k$ is a field, $\otimes$ is $\otimes_k$, all maps are $k$-linear, etc. We recall a few facts and definitions that will be useful in the upcoming sections.

We say that $(X_{\bullet}, d_i)$ is a pre-simplicial object  in a category $\mathcal{C}$ if for every $n\in \mathbb{N}$, we have an object $X_n\in \mathcal{C}$, and for all $0\leq i\leq n$ we have morphisms $\delta_i:X_{n+1}\to X_{n}$  that satisfy the following relation:
\begin{eqnarray}
&&\delta_i\delta_j=\delta_{j-1}\delta_i\; {\rm ~if~} \; i<j. \label{pres}
\end{eqnarray}
When $\mathcal{C}$ is the category of vector spaces over $k$, we say that $(X_{\bullet}, d_i)$ is a pre-simplicial $k$-module.

Let $A$ be a $k$-algebra (not necessarily  commutative),  and $M$ be an $A$-bimodule. 
We consider the pre-simplicial module $(C_n(A, M), d_i)$  that is used to define Hochschild homology. That is $C_n(A, M)=M\otimes A^{\otimes n}$ and
\begin{eqnarray}
d_i(x_0\otimes x_1\otimes ...\otimes x_n)= \left\{\begin{array}{ll}
  x_0 x_1\otimes x_2\otimes ...\otimes x_n& \mbox{ if $i=0$}\\ 
  x_0\otimes ...\otimes x_{i-1}\otimes x_ix_{i+1}\otimes x_{i+2}\otimes ...\otimes x_n & \mbox{ if  $1\leq i\leq n-1$ }\\
 x_nx_0\otimes x_1\otimes ...\otimes x_{n-1} & \mbox{ if  $i= n$. }\\
 \end{array}\right.\label{C2}
\end{eqnarray}
For  more results concerning Hochschild (co)homology, we refer to \cite{g1}, \cite{g2}, \cite{h},  and  \cite{lo}.

We recall from  \cite{p} the construction of the higher order Hochschild homology.  Assume that $A$ is a commutative $k$-algebra, and $M$ a symmetric $A$-bimodule.

Let $V$ be a finite pointed set such that $\vert V\vert=v+1$. We define $\mathcal{L}(A,M)(V)=M\otimes A^{\otimes v}$. 
For $\phi:V\to W$   we define 
$$\mathcal{L}(A,M)(\phi): \mathcal{L}(A,M)(V)\to \mathcal{L}(A,M)(W)$$
determined as follows:
$$\mathcal{L}(A,M)(\phi)(a_0\otimes a_1\otimes ...\otimes a_v)=b_0m\otimes b_1\otimes ...\otimes b_{w}$$ 
where $$b_i=\prod_{\{j\in V | \phi (j)=i\}}a_j.$$
Take $\mathbf{X}=(X_{\bullet},d_i, s_i)$ to be a finite pointed simplicial set, and define 
$$C^{X_\bullet}_n(A,M)=\mathcal{L}(A,M)(X_n).$$ For each $d_i:X_{n}\to X_{n-1}$ we take $(d_i)_*=\mathcal{L}(A,M)(d_i):C^{X_{\bullet}}_{n}(A,M)\to C^{X_{\bullet}}_{n-1}(A,M)$ and take $\partial_{n}:C_{n}^{X_{\bullet}}(A,M)\to C_{n-1}^{X_{\bullet}}(A,M)$ defined as $\partial_n=\sum_{i=0}^{n}(-1)^i(d_i)_*$. 

The homology of this complex is denoted by $H_{\bullet}^{\mathbf{X}}(A,M)$ and is called the higher order Hochschild homology. When  $\mathbf{X}_\bullet$ is modeled by $S^1$ with the usual simplicial structure, one recovers the complex that defines Hochschild homology. For more results concerning higher order Hochschild (co)homology we refer to \cite{gi}, \cite{gi2}, \cite{gtz}, and \cite{p}. 

Secondary cohomology was introduced in \cite{sta} in order to study $B$-algebra structures on $A[[t]]$. 
The homology version and the associated cyclic (co)homology were introduced and studied in  \cite{jma}. The relation between the secondary and  higher order Hochschild cohomology was established in  \cite{bm}.

\section{A Simplicial Construction for Noncommutative Settings} 

In this section we give a general construction that is designed  to construct pre-simplicial modules in noncommutative settings. 

Its practical relevance will become apparent in the next section, when we use it to define several (co)homology theories for noncommutative algebras. 
First, we need a few definitions.

\begin{definition} Suppose that for each $n\in \mathbb{N}^*$, and for each $0\leq i\leq n$ we have a finite set $\Lambda_n^i$. We call such a collection a  \textbf{$\Delta$-indexing set}, and we denote it by $\mathbf{\Lambda}=\{\Lambda_n^i\,|\,n\in \mathbb{N}^*,\; i=0,\dots,n\}$.  
\end{definition}

\begin{definition}
Let $\mathbf{\Lambda}=\{\Lambda_n^i\,|\,n\in \mathbb{N}^*,\; i=0,\dots,n\}$ be a $\Delta$-indexing set.  We call $\mathcal{M}=(M_n,d_i^\alpha)$ a \textbf{$\mathbf{\Lambda}$-system} if it consists of a collection of $k$-vector spaces $\{M_n\}_{n=0}^{\infty}$, and a collection of $k$-linear morphisms $d_i^{\alpha}:M_n\rightarrow M_{n-1}$ for all $\alpha \in \Lambda_n^i$.
\end{definition}

Note that if $\vert\Lambda_n^i\vert=1$ for all $n\in \mathbb{N}^*$, and all $0\leq i\leq n$, then a $\mathbf{\Lambda}$-system is a pre-simplicial $k$-module. However, in general, a $\mathbf{\Lambda}$-system  does not automatically define a pre-simplicial $k$-module or a chain complex.  The plan is to prove that every $\mathbf{\Lambda}$-system contains a unique  maximal pre-simplicial $k$-module.

\begin{definition}\label{l-def}
Let $\mathcal{M}=(M_n,d_i^\alpha)$ be a $\mathbf{\Lambda}$-system. We call $A_\bullet=(A_n)_{n\geq0}$ a \textbf{$\lambda$-subcomplex} of the $\mathbf{\Lambda}$-system $\mathcal{M}$ if $A_n$ is a sub-vector-space of $M_n$ for every $n$, and for $0\leq i \leq n$ we have, \\
(i) $d_i^\alpha|_{A_n}=d_i^\beta|_{A_n}$ for all $\alpha,\beta \in \Lambda_n^i$, with this common restriction denoted $d_i^A$,\\
(ii) $d_i^A(A_n)\subseteq A_{n-1}$, \\
(iii) $d_i^Ad_j^A=d_{j-1}^Ad_i^A$ for $i<j$.
\end{definition}

\begin{remark}
Notice that (ii) and (iii) imply that $(A_n, d_i^A)$ is a pre-simplicial module and in particular we get a chain complex  (hence the name $\lambda$-sucomplex).
\end{remark}

\begin{remark}\label{restriction}
Let $\mathcal{M}$ be a $\mathbf{\Lambda}$-system, and $\mathcal{S}$ denote the collection of all $\lambda$-subcomplexes. It is obvious that  $\mathcal{S}\neq \emptyset$  We impart a partial order on $\mathcal{S}$ by saying $A_\bullet \leq B_\bullet$ if there exists inclusions $A_n \subseteq B_n$ in every dimension $n$.  Notice that both $d_i^A=d_i^\alpha |_{A_n}$ and $d_i^B=d_i^\alpha |_{B_n}$, so since $A_n\subseteq B_n$, we have $d_i^B|_{A_n}=d_i^A$. 
\end{remark}

\begin{theorem}\label{max}
Let $\mathcal{M}$ be a $\mathbf{\Lambda}$-system.  Then $\mathcal{M}$ has a unique maximal $\lambda$-subcomplex. In particular we have a unique maximal pre-simplicial $k$-module $\Theta(\mathcal{M})$. 
\end{theorem}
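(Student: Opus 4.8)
The plan is to construct the maximal $\lambda$-subcomplex explicitly as the sum of all $\lambda$-subcomplexes, rather than invoking Zorn's lemma; exhibiting a largest element then yields uniqueness for free. Let $\mathcal{S}=\{A^{(j)}_\bullet\}_{j\in J}$ be the collection of all $\lambda$-subcomplexes of $\mathcal{M}$, which is nonempty by Remark \ref{restriction} (it contains at least the zero subcomplex). For each $n$ I would set
$$\Theta_n=\sum_{j\in J}A^{(j)}_n\subseteq M_n,$$
the internal sum inside $M_n$, so that every element of $\Theta_n$ is a finite sum $x=\sum_k x_k$ with each $x_k$ lying in some $A^{(j_k)}_n$. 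The claim is that $\Theta_\bullet=(\Theta_n)_{n\geq0}$ is itself a $\lambda$-subcomplex, and since it visibly contains every member of $\mathcal{S}$, it is the unique maximal one.

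The crux is to verify the three conditions of Definition \ref{l-def} for $\Theta_\bullet$, and the key point throughout is that each condition is preserved under sums precisely because the maps $d_i^\alpha$ are $k$-linear. For condition (i), given $\alpha,\beta\in\Lambda_n^i$ and $x=\sum_k x_k\in\Theta_n$, linearity gives $d_i^\alpha(x)=\sum_k d_i^\alpha(x_k)=\sum_k d_i^\beta(x_k)=d_i^\beta(x)$, since $d_i^\alpha$ and $d_i^\beta$ already agree on each $A^{(j_k)}_n$; thus the common restriction $d_i^\Theta$ is well defined, and by construction it restricts to $d_i^{A^{(j)}}$ on each $A^{(j)}_n$ (cf.\ Remark \ref{restriction}). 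Condition (ii) follows likewise: $d_i^\Theta(x)=\sum_k d_i^{A^{(j_k)}}(x_k)\in\sum_k A^{(j_k)}_{n-1}\subseteq\Theta_{n-1}$.

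For condition (iii) I would fix $i<j$ and an element $x=\sum_k x_k\in\Theta_n$, and reduce the pre-simplicial identity on $\Theta_\bullet$ to the identity on each piece. Because $d_j^\Theta(x_k)=d_j^{A^{(j_k)}}(x_k)\in A^{(j_k)}_{n-1}$, and $d_i^\Theta$ again restricts to $d_i^{A^{(j_k)}}$ there, one gets $d_i^\Theta d_j^\Theta(x_k)=d_i^{A^{(j_k)}}d_j^{A^{(j_k)}}(x_k)=d_{j-1}^{A^{(j_k)}}d_i^{A^{(j_k)}}(x_k)=d_{j-1}^\Theta d_i^\Theta(x_k)$, using condition (iii) for $A^{(j_k)}_\bullet$; summing over $k$ gives the relation on all of $\Theta_n$. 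With the three conditions established, $\Theta_\bullet\in\mathcal{S}$ and dominates every element of $\mathcal{S}$, so it is the maximum; maximality in a poset forces uniqueness, and setting $\Theta(\mathcal{M})=\Theta_\bullet$ yields the asserted maximal pre-simplicial $k$-module, its pre-simplicial structure being supplied by the Remark following Definition \ref{l-def}.

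The only genuinely delicate point I anticipate is bookkeeping the common restriction $d_i^\Theta$ coherently: one must check that the single map obtained on $\Theta_n$ via (i) really does restrict, on each summand $A^{(j)}_n$, to that summand's own $d_i^{A^{(j)}}$, since conditions (ii) and (iii) for $\Theta_\bullet$ are phrased in terms of $d_i^\Theta$ but are known a priori only for the individual $d_i^{A^{(j)}}$. This is exactly the compatibility recorded in Remark \ref{restriction}; once it is in hand, every verification collapses to a one-line linearity argument, and no maximality or Zorn machinery is required.
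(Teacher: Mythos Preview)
Your proof is correct and takes a genuinely more direct route than the paper's. The paper first invokes Zorn's Lemma to obtain \emph{some} maximal $\lambda$-subcomplex (showing that a chain of $\lambda$-subcomplexes has an upper bound given by the union), and then establishes uniqueness separately by showing that the sum $C_\bullet+D_\bullet$ of two maximal $\lambda$-subcomplexes is again a $\lambda$-subcomplex. You observe, in effect, that this closure-under-sums argument works just as well for an arbitrary family, so one can simply take the sum of \emph{all} $\lambda$-subcomplexes at once and obtain the maximum directly, rendering both Zorn's Lemma and the separate uniqueness step unnecessary. The verifications of conditions (i)--(iii) you give are essentially the same computations the paper carries out for $C_\bullet+D_\bullet$, extended from two summands to finitely many by linearity; your attention to the compatibility of $d_i^\Theta$ with each $d_i^{A^{(j)}}$ via Remark~\ref{restriction} is exactly what is needed to make (ii) and (iii) go through. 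Your approach is more elementary and avoids a small imprecision in the paper's Zorn argument (which treats only countable chains); the paper's two-step approach, on the other hand, isolates the uniqueness mechanism more explicitly.
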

\begin{proof}
First, we show the existence of a maximal $\lambda$-subcomplex.  To do this, we shall use Zorn's Lemma.  Consider a countable, totally ordered subset of $\mathcal{S}$, i.e. $\{A_\bullet^m\}_{m=0}^\infty$ with $A_\bullet^m \leq A_\bullet^{m+1}$.  Claim: $\Omega_\bullet :=\bigcup_{m=1}^{\infty} A_\bullet ^m$ (where $\Omega_n=\bigcup\limits_{m\geq 1}A_n^m$) is a $\lambda$-subcomplex.\\
(i): Indeed, if $a\in \Omega_n$, then $a\in A_n^m$ for some $m$. Then for all $\alpha,\beta \in \Lambda_n^i$, $d_i^\alpha(a)=d_i^\beta(a)$ since $A_\bullet^m$ is a $\lambda$-subcomplex. Thus (i) is satisfied.\\
(ii): Similarly, as $a\in \Omega_n$ is also in some $A_n^m$, $d_i^\Omega(a)=d_i^{A^m}(a)\in A_{n-1}^m\subseteq \Omega_{n-1}$.  Hence (ii) is satisfied.\\
(iii): As above, take $a\in \Omega_n$.  Then $a\in A_n^m$ for some $m$, so $$d_i^\Omega(d_j^\Omega(a))=d_i^{A_m}(d_j^{A_m}(a))=d_{j-1}^{A_m}(d_i^{A_m}(a))=d_{j-1}^\Omega(d_i^\Omega(a)),$$  therefore (iii) holds.\\
Thus, $\Omega_\bullet$ is a $\lambda$-subcomplex (i.e. $\Omega_\bullet\in \mathcal{S}$).\\
 Now, being the union of all $A_\bullet^m$, each $A_\bullet^m \leq \Omega_\bullet$, so this is indeed an upper bound of the totally ordered subset $\{A_\bullet^m\}_{m=0}^\infty$.  Thus, by Zorn's Lemma, there exists a maximal element of $\mathcal{S}$.

Now we show there is a unique maximal $\lambda$-subcomplex.  Suppose there are two maximal $\lambda$-subcomplexes, $C_\bullet$ and $D_\bullet$.  Consider $Y_\bullet := C_\bullet + D_\bullet$, where $Y_n$ as a $k$-vector space is $C_n + D_n=\{y\in M_n~|~y=c+d, \text{~for some~} c\in C_n,~d\in D_n\}$. We show that $Y_\bullet$ is a $\lambda$-subcomplex.\\
(i): Take $y\in Y_n$.  Then $y=c+d$ for some $c\in C_n$ and $d\in D_n$.  So for all $\alpha, \beta \in \Lambda_n^i$, we have  $$d_i^\alpha(y)=d_i^\alpha(c+d)=d_i^\alpha(c)+d_i^\alpha(d)=d_i^\beta(c)+d_i^\beta(d)=d_i^\beta(c+d)=d_i^\beta(y).$$ This shows (i).\\
(ii): If $y\in Y_n$, then $y=c+d$ for some $c\in C_n$ and $d\in D_n$, so $$d_i^Y(y)=d_i^Y(c+d)=d_i^Y(c)+d_i^Y(d)=d_i^C(c)+d_i^D(d)\in C_{n-1}+D_{n-1}=Y_{n-1}.$$ Hence (ii) holds.\\
(iii): Let $i<j$, and take  $y\in Y_{n}$ with $y=c+d$ for some $c\in C_{n}$ and $d\in D_n$.  Since $C_\bullet$ and $D_\bullet$ satisfy (iii) and using the observation in the proof for (ii), we have: $$d_i^Y(d_j^Y(y))=d_i^Y(d_j^Y(c+d))=d_i^Y(d_j^C(c)+d_j^D(d))=d_i^Y(d_j^C(c))+d_i^Y(d_j^D(d))=$$ $$d_i^C(d_j^C(c))+d_i^D(d_j^D(d))=d_{j-1}^C(d_i^C(c))+d_{j-1}^D(d_i^D(d))$$ and on the other hand, $$d_{j-1}^Y(d_i^Y(y))=d_{j-1}^Y(d_i^Y(c+d))=d_{j-1}^Y(d_i^C(c)+d_i^D(d))=d_{j-1}^Y(d_i^C(c))+d_{j-1}^Y(d_i^D(d))=$$ $$d_{j-1}^C(d_i^C(c))+d_{j-1}^D(d_i^D(d)).$$  
Thus, $d_i^Yd_j^Y=d_{j-1}^Yd_i^Y$, so (iii) is satisfied. 

Therefore, $Y_\bullet$ is a $\lambda$-subcomplex.

Clearly there are injections $C_n \hookrightarrow Y_n$ and $D_n \hookrightarrow Y_n$, but they were chosen to be maximal, so it must be that $C_\bullet=Y_\bullet=D_\bullet$.  Hence, a maximal $\lambda$-subcomplex is unique, and we denote it by $\Theta(\mathcal{M})$.
\end{proof}

\begin{definition}
Let $\mathcal{M}$ be a $\mathbf{\Lambda}$-system with the unique maximal $\lambda$-subcomplex $\Theta(\mathcal{M})$.  We call the homology of $\Theta(\mathcal{M})$  the  {\bf $\mathbf{\Lambda}$-homology group of $\mathcal{M}$}, and we denote it by  $H_n(\mathcal{M}):=H_n(\Theta(\mathcal{M})_\bullet)$.  
\end{definition}

Next we need to talk about morphisms between $\mathbf{\Lambda}$-systems.

\begin{definition} Take $\mathbf{\Gamma}$ and $\mathbf{\Lambda}$ to be two $\Delta$-indexing sets, $\mathcal{M}=(M_n,d_i^\beta)$  a $\mathbf{\Gamma}$-system, and $\mathcal{N}=(N_n,\delta_i^{\alpha})$ a $\mathbf{\Lambda}$-system. A \textbf{$\lambda$-morphism} from $\mathcal{M}$ to $\mathcal{N}$ is a collection of $k$-linear maps $f_n:M_n\to N_n$ for all $n\in \mathbb{N}$, such that if $n\geq 1$, then for all $0\leq i\leq n$, and all $\alpha\in \Lambda_n^i$ there exists a $\beta\in \Gamma_n^i$ such that  $\delta_i^{\alpha}f_n=f_{n-1}d_i^{\beta}$. 
 \label{def81}
\end{definition} 

We have the following result.  
\begin{lemma} Take $\mathbf{\Gamma}$ and $\mathbf{\Lambda}$ to be two $\Delta$-indexing sets, $\mathcal{M}=(M_n,d_i^\beta)$  a $\mathbf{\Gamma}$-system, and $\mathcal{N}=(N_n,\delta_i^{\alpha})$ a $\mathbf{\Lambda}$-system. If $f:\mathcal{M}\to \mathcal{N}$ is a $\lambda$-morphism then $f$ induces a morphism of pre-simplicial modules $f:\Theta(\mathcal{M})\to \Theta(\mathcal{N})$. 
 \label{lemma10}
\end{lemma}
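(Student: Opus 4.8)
The plan is to show that the image of $\Theta(\mathcal{M})$ under $f$ is itself a $\lambda$-subcomplex of $\mathcal{N}$, and then to use the maximality of $\Theta(\mathcal{N})$ to conclude that this image lands inside $\Theta(\mathcal{N})$; the compatibility with face maps will then fall out of the same computation. Write $A_n:=\Theta(\mathcal{M})_n$ and set $P_n:=f_n(A_n)\subseteq N_n$.

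The first step, which I regard as the genuine crux, is to upgrade the defining condition of a $\lambda$-morphism into a clean identity once we restrict to $A_n$. By Definition \ref{def81}, for each $\alpha\in\Lambda_n^i$ there merely \emph{exists} some $\beta\in\Gamma_n^i$ with $\delta_i^{\alpha}f_n=f_{n-1}d_i^{\beta}$, and a priori this $\beta$ depends on $\alpha$. The key observation is that this dependence evaporates on $A_n$: since $A_\bullet$ is a $\lambda$-subcomplex of the $\mathbf{\Gamma}$-system $\mathcal{M}$, part (i) of Definition \ref{l-def} gives $d_i^{\beta}|_{A_n}=d_i^A$ for every $\beta\in\Gamma_n^i$. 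Hence on $A_n$ we obtain the single formula $\delta_i^{\alpha}f_n|_{A_n}=f_{n-1}d_i^A$, valid for \emph{all} $\alpha\in\Lambda_n^i$. This one identity is the engine of the whole argument, and noticing that it is exactly condition (i) for $\Theta(\mathcal{M})$ that removes the ambiguity in the choice of $\beta$ is the only delicate point.

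With this in hand I would verify the three conditions of Definition \ref{l-def} for $P_\bullet$. Condition (i) is immediate: for $y=f_n(a)\in P_n$ the value $\delta_i^{\alpha}(y)=f_{n-1}(d_i^A(a))$ is independent of $\alpha$, so all $\delta_i^{\alpha}$ agree on $P_n$, and I denote the common restriction by $\delta_i^P$. Condition (ii) follows since $d_i^A(a)\in A_{n-1}$ by (ii) for $A_\bullet$, whence $\delta_i^P(y)=f_{n-1}(d_i^A(a))\in P_{n-1}$. For condition (iii), with $i<j$ and $y=f_n(a)$, two applications of the engine formula give $\delta_i^P\delta_j^P(y)=f_{n-2}(d_i^Ad_j^A(a))$ and $\delta_{j-1}^P\delta_i^P(y)=f_{n-2}(d_{j-1}^Ad_i^A(a))$, and these coincide because $A_\bullet$ satisfies (iii). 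Thus $P_\bullet$ is a $\lambda$-subcomplex of $\mathcal{N}$.

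Finally I would invoke Theorem \ref{max}. Exactly as in its uniqueness argument, the sum of two $\lambda$-subcomplexes is again a $\lambda$-subcomplex, so $P_\bullet+\Theta(\mathcal{N})$ is a $\lambda$-subcomplex containing the maximal $\Theta(\mathcal{N})$; maximality forces $P_n\subseteq\Theta(\mathcal{N})_n$. Therefore $f_n\big(\Theta(\mathcal{M})_n\big)\subseteq\Theta(\mathcal{N})_n$, so $f$ restricts to $k$-linear maps $f_n\colon\Theta(\mathcal{M})_n\to\Theta(\mathcal{N})_n$. Reading the engine formula on $\Theta(\mathcal{N})$, where every $\delta_i^{\alpha}$ equals the common face map $\delta_i^{B}$, yields $\delta_i^{B}f_n=f_{n-1}d_i^A$ on $\Theta(\mathcal{M})_n$, which is precisely the assertion that the restricted $f$ commutes with the face maps and hence is a morphism of pre-simplicial modules. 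Apart from the quantifier subtlety in the first step, everything else is a routine check of the axioms together with the standard sum-of-subcomplexes consequence of Theorem \ref{max}.
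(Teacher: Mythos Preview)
Your proof is correct and follows essentially the same route as the paper: define the image $P_n=f_n(\Theta(\mathcal{M})_n)$, observe that on $\Theta(\mathcal{M})_n$ every $d_i^\beta$ collapses to the single map $d_i^A$ so that $\delta_i^\alpha f_n|_{A_n}=f_{n-1}d_i^A$ independently of $\alpha$, verify conditions (i)--(iii) for $P_\bullet$, and conclude by maximality of $\Theta(\mathcal{N})$. The only cosmetic difference is that you explicitly invoke the sum-of-subcomplexes step from the uniqueness half of Theorem \ref{max}, whereas the paper simply appeals to maximality directly; both amount to the same thing.
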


\begin{proof} First we show that $f_n(\Theta(\mathcal{M})_n)\subseteq \Theta(\mathcal{N})_n$. If $n=0$ that is obvious since $\Theta(\mathcal{N})_0=N_0$. Define  $A_s\subseteq N_s$ determined by $$A_s=f_s( \Theta(\mathcal{M})_{s}), \; {\rm for}\; {\rm all} \; s\geq 0.$$ 
We want to show that $A_{\bullet}=(A_s)_{s\geq 0}$ defines a $\lambda$-subcomplex in $\mathcal{N}$.

Because $(\Theta(\mathcal{M})_{s})_{s\geq 0}$ is a $\lambda$-subcomplex of $\mathcal{M}$, then for all $\beta_1$, $\beta_2$ in $\Gamma_s^i$ we have that $d_i^{\beta_1}=d_i^{\beta_2}$ on $\Theta(\mathcal{M})_{s}$. We will denote this map by $d_i$ (suppressing the $s$ index). 
	
Take $n\geq 1$ and $0\leq i\leq n$. Since $f$ is a $\lambda$-morphism then for  $\alpha_1$, $\alpha_2 \in \Lambda_n^i$  we can find $\beta_1,\beta_2\in \Gamma_n^i$ such that 
$\delta_i^{\alpha_1}f_n=f_{n-1}d_i^{\beta_1}$ and $\delta_i^{\alpha_2}f_n=f_{n-1}d_i^{\beta_2}$.   
Take $x\in A_n$ with $x=f_n(c)$ for some $c\in \Theta(\mathcal{M})_{n}$.  
We have $$\delta_i^{\alpha_1}(x)=\delta_i^{\alpha_1}(f_n(c))=f_{n-1}(d_i^{\beta_1}(c))=f_{n-1}(d_i(c))=f_{n-1}(d_i^{\beta_2}(c))=\delta_i^{\alpha_2}(f_n(c))=\delta_i^{\alpha_2}(x),$$ 
which means that $\delta_i^{\alpha_1}=\delta_i^{\alpha_2}$ on $A_n$ for all $\alpha_1$, $\alpha_2\in \Lambda_n^i$.  And so we have condition (i) from Definition \ref{l-def}. We denote the common restriction by $\delta_i^A$. 

Take $x\in A_n$ with $x=f_n(c)$ for some $c\in \Theta(\mathcal{M})_{n}$. 
Then we have $$\delta_i^A(x)=\delta_i^Af_n(c)=\delta_i^{\alpha}f_n(c)=f_{n-1}d_i^{\beta}(c)=f_{n-1}d_i(c)\in A_{n-1},$$ 
for some $\alpha\in \Lambda_n^i$ (and the corresponding $\beta\in \Gamma_n^i$). This means that $$\delta_i^A(A_n)\subseteq A_{n-1},$$
and so we have condition (ii) from Definition \ref{l-def}.
 
Finally, for all $i<j$, and $x=f_n(c)$ for some $c\in \Theta(\mathcal{M})_{n}$ we have
$$\delta_i^A\delta_j^A(x)=\delta_i^A\delta_j^A(f_n(c))=f_{n-2}(d_id_j(c))=f_{n-2}(d_{j-1}d_i(c)=\delta_{j-1}^A\delta_i^A(f_n(c))=\delta_{j-1}^A\delta_i^A(x).$$ 
Thus $(A_s)_{s\geq 0}$ defines an $\lambda$-subcomplex, and so we get that $A_n=f_n(\Theta(\mathcal{M})_n)\subseteq \Theta(\mathcal{N})_n$ for all $n\in \mathbb{N}$. 

We already noticed that $\delta_i^Af_n=f_{n-1}d_i$, which means that $f$ is a morphism of pre-simplicial modules from $\Theta(\mathcal{M})$ to $\Theta(\mathcal{N})$. 
\end{proof} 

\section{A Few Examples}

\subsection{Higher Order Hochschild Homology}

Let $A$ be a $k$-algebra (not necessarily commutative), and $M$ be an $A$-bimodule. 
As a warm-up,  we define higher order Hochschild homology over the sphere $S^2$.  We will use the description from \cite{l2} as a point of reference. 

\begin{example} \label{example2}
Set $\Lambda_n^0=\{\rho~|~\rho\in S_{n}\}$, where $S_{n}$ is the set of permutations of $\{0,2,\dots, n\}$. For $0<i<n$, set $$\Lambda_n^i=\{\sigma=(\sigma_1,\cdots,\sigma_{n-1})~|~\sigma_j\in\{1,\tau\}\},$$
and $\Lambda_n^n=\{\rho~|~\rho\in S_{n}\}$, where $S_{n}$ is the set of permutations of $\{0,\dots, n-1\}$. 

Let $\rho$ act on a tensor product of length $n$ by permuting the elements according to $\rho$ and then taking the product (i.e. $\rho(x_0\otimes x_2\otimes ... \otimes x_n)=x_{\rho(0)}x_{\rho(2)}...x_{\rho(n)}$). 
Take $$\sigma_j(x\otimes y)=\begin{cases}xy ~~\text{if}~~ \sigma_j=1\\
yx ~~\text{if}~~ \sigma_j=\tau .\end{cases}$$
We define a $\mathbf{\Lambda}$-system $\mathcal{F}$ by taking $F_n=M\otimes A^{\otimes\frac{n(n-1)}{2}}$, and $d_i^\sigma:M\otimes A^{\otimes\frac{n(n-1)}{2}}\to M\otimes A^{\otimes\frac{(n-1)(n-2)}{2}}$ defined as follows:
$$d_0^\rho(m_0\otimes\begin{pmatrix} 
1 & a_{1,2} & \cdots & a_{1,n}\\
 & \ddots & \vdots & \vdots\\
 & & 1 & a_{n-1,n}\\
 & & & 1\end{pmatrix})=
\rho(m_0\otimes a_{1,2}\otimes\cdots\otimes a_{1,n})\otimes\begin{pmatrix} 
1 & a_{2,3} & \cdots & a_{2,n}\\
& \ddots & \vdots & \vdots\\
& & 1 & a_{n-1,n}\\
& & & 1\end{pmatrix}.$$

For $1\leq i\leq n-1$,

$$d_i^\sigma(m_0\otimes\begin{pmatrix} 
1 & a_{1,2} & \cdots & a_{1,n}\\
 & \ddots & \vdots & \vdots\\
 & & 1 & a_{n-1,n}\\
 & & & 1\end{pmatrix})=$$ 
$$
\sigma_i(m_0\otimes a_{i,i+1}){\otimes\begin{pmatrix} 
1 & a_{1,2} & \cdots & \sigma_1(a_{1,i}\otimes a_{1,i+1}) & \cdots & \cdots & a_{2,n}\\
& \ddots & \ddots & \vdots & \cdots & \vdots & \vdots\\
& & 1 & \sigma_{i-1}(a_{i-1,i}\otimes a_{i-1,i+1}) & \cdots & \cdots & a_{i-1,n}\\
& & & 1 & \sigma_{i+1}(a_{i,i+2}\otimes a_{i+1,i+2}) & \cdots & \sigma_{n-1}(a_{i,n}\otimes a_{i+1,n})\\
& & & & \ddots & \vdots & \vdots\\
& & & & & 1 & a_{n-1,n}\\
& & & & & & 1\end{pmatrix}}.$$

Finally, 
$$d_n^\rho(m_0\otimes\begin{pmatrix} 
1 & a_{1,2} & \cdots & a_{1,n}\\
 & \ddots & \vdots & \vdots\\
 & & 1 & a_{n-1,n}\\
 & & & 1\end{pmatrix})=
\rho(m_0\otimes a_{1,n}\otimes\cdots\ a_{n-1,n})\otimes\begin{pmatrix} 
1 & a_{1,2} & \cdots & a_{1,n-1}\\
& \ddots & \vdots & \vdots\\
& & 1 & a_{n-2,n-1}\\
& & & 1\end{pmatrix}.$$
Notice that when $A$ is commutative and $M$ is $A$-symmetric we get the usual higher order Hochschild homology $H_n^{S^2}(A,M)$.
\end{example}

Next, we want to define higher order Hochschild homology for a general simplicial set. 
\begin{example} \label{example3}  Let $\mathbf{X}=(X_{\bullet},d_i,s_i)$ be a pointed simplicial set.
Consider the  $\Delta$-indexing set $\mathbf{\Lambda}^{X_{\bullet}}$ defined by  $$\Lambda_n^i=\prod_{j\in X_{n-1}}S_{Z_n^i(j)},$$ where  $S_Z$ is the symmetric group on the set $Z$, and for $j\in X_{n-1}$ we set $Z_n^i(j)= d_i^{-1}(j)$  where $d_i:X_{n}\to X_{n-1}$.

Let $A$ be a $k$-algebra and $M$ an $A$-bimodule. We define  the $\mathbf{\Lambda}^{X_{\bullet}}$-system $\mathcal{C}^{X_{\bullet}}(A,M)$ as follows. For each $n$ define $C^{X_\bullet}_n(A,M)=M\otimes A^{\otimes x_n}$ where $x_n=\vert X_n\vert-1$. For $\sigma=(\sigma_0, \sigma_1,...,\sigma_{x_{n-1}})\in \Lambda_n^i$ we define 
$d_i^{\sigma}: C^{X_\bullet}_n(A,M)\to C^{X_\bullet}_{n-1}(A,M)$ determined by
$$d_i^{\sigma}(a_0\otimes a_1\otimes ...\otimes a_{x_n})=b_0^{\sigma}\otimes b_1^{\sigma}\otimes ...\otimes b_{x_{n-1}}^{\sigma},$$ where for $j\in X_{n-1}$ we define 
$$b_j^{\sigma}=\prod_{\{s\in X_{n} | d_i (s)=j\}}a_{\sigma(s)}.$$
In the last formula the product is ordered over $s$. Notice that the order that we pick on $Z_n^i(j)$ is not important, we just want to make sure that we cover all the possible ordered products. 

As one expects, if $A$ is commutative and $M$ is a symmetric $A$-bimodule we get the usual higher  order Hochschild homology $H^{\mathbf{X}}_{\bullet}(A,M)$. 
\end{example}

\begin{example}
 Take $A$ a commutative $k$-algebra, and $M$ a symmetric $A$-bimodule.  Take $e\in M_l(A)$, and  $m\in M_l(B)$ such that $e^2=e$,  and $em=me=m$. Consider the element 
$$W_{n}(e,m)=m\otimes e^{\otimes x_n}\in C^{X_\bullet}_n(A,M).$$
Notice that $d_i^{\alpha}(W_n(e,m))=W_{n-1}(e,m)$, which means that if we define $C_n=kW_n(e,m)$ we get a $\lambda$-subcomplex, and so $W_n(e,m)\in \Theta(\mathcal{C}^{X_\bullet}(A,M))_n$. Moreover $\partial_{2n}(W_{2n}(e,m))=0$ and so we get an element $\overline{W_{2n}(e,m)}\in H_{2n}(M_l(A),M_l(M))$. 
\end{example}

\begin{remark}
Notice that  the $\mathbf{\Lambda}$-system from Example \ref{example3} is completely determined by $A$, $M$ and the simplicial set $\mathbf{X}$.  
We  denote the homology groups $H_n(\Theta(\mathcal{C}^{\mathbf{X}}(A,M)))$  by   $H_n^{\mathbf{X}}(A,M)$. When $A$ is commutative and $M$ is a symmetric $A$-bimodule,  we recover the higher order Hochschild homology, so this notation is consistent with  \cite{p}.  When $\mathbf{X}=S^2$ with the usual simplicial structure (see \cite{l2}), we recover Example \ref{example2}.\label{remark8}
 
\end{remark}

\subsection{Secondary Hochschild Homology}

The next example is associated with the secondary Hochs\-child homology $HH_{\bullet}(A,B,\varepsilon)$. Recall that in \cite{jma} we need $A$ to be a $B$-algebra, and in particular $B$ must be commutative. Using the construction from the previous section, we are able to drop that condition.

\begin{example}\label{triple}
Let $A$ and $B$ be $k$-algebras, and $\varepsilon:B\rightarrow A$ be a $k$-algebra morphism.  Here we do not assume $B$ is commutative.
Take $\mathbf{\Lambda}=\{\Lambda_n^i\}$ as follows: for $0\leq i\leq n-1$
$$\Lambda_n^i=\{\sigma=(\sigma_0,\sigma_1,\ldots,\sigma_i,\ldots,\sigma_{n-1})~:~\sigma_j\in\{1,\tau\} \text{~for~} j\neq i,~\sigma_i\in\{l,c,r\}\},$$
and 
$$\Lambda_n^n=\{\sigma=(\sigma_0,\sigma_1,\ldots,\sigma_i,\ldots,\sigma_{n-1})~:~\sigma_j\in\{1,\tau\} \text{~for~} j\neq 0,~\sigma_0\in\{l,c,r\}\}.$$
We define a $\mathbf{\Lambda}$-system $\mathcal{E}(A,B,\varepsilon)$ where we set $$E_n(A,B,\varepsilon)=A^{\otimes n+1}\otimes B^{\otimes\frac{n(n+1)}{2}}.$$
For $0\leq i\leq n-1$ and $\sigma\in \Lambda_n^i$ define $d_i^{\sigma}:A^{\otimes n+1}\otimes B^{\otimes\frac{n(n+1)}{2}}\rightarrow A^{\otimes n}\otimes B^{\otimes\frac{n(n-1)}{2}}$  given by
$$
d_i^{\sigma}\left(\otimes\begin{pmatrix}
a_0 & b_{0,1} & \cdots & b_{0,n-1} & b_{0,n}\\
 & a_1 & \cdots & b_{1,n-1} & b_{1,n}\\
 &  & \ddots & \vdots & \vdots\\
 &  &  & a_{n-1} & b_{n-1,n}\\
 &  &  &  & a_{n}\\
\end{pmatrix}\right)=
$$
$$
\otimes\begin{pmatrix}
a_0 & \cdots & b_{0,i-1} & \sigma_0(b_{0,i}\otimes b_{0,i+1}) & b_{0,i+2} & \cdots & b_{0,n}\\
 & \ddots & \vdots & \vdots & \vdots & \ddots & \vdots\\
  &  & a_{i-1} & \sigma_{i-1}(b_{i-1,i}\otimes b_{i-1,i+1}) & b_{i-1,i+2} & \cdots & b_{i-1,n}\\
  &  &   & \sigma_i(a_i\otimes a_{i+1}\otimes b_{i,i+1}) & \sigma_{i+1}(b_{i,i+2}\otimes b_{i+1,i+2}) & \cdots & \sigma_{n-1}(b_{i,n}\otimes b_{i+1,n})\\
  &  &  &  & a_{i+2} & \cdots & b_{i+2,n}\\
  &  &  &  &  & \ddots & \vdots\\
  &  &  &  &  &  & a_{n}\\
\end{pmatrix}.
$$
Where, for $j\neq i$ we have
$$
\sigma_j(b_1\otimes b_2)=
\begin{cases}
b_1b_2 & \text{if~~} \sigma_j=1\in\Lambda_n^i\\
b_2b_1 & \text{if~~} \sigma_j=\tau\in\Lambda_n^i\\
\end{cases}
$$
for all $b_1,b_2\in B$, and
$$
\sigma_i(a_1\otimes a_2\otimes b)=
\begin{cases}
\varepsilon(b)a_1a_2 & \text{if~~} \sigma_i=l\in\Lambda_n^i\\
a_1\varepsilon(b)a_2 & \text{if~~} \sigma_i=c\in\Lambda_n^i\\
a_1a_2\varepsilon(b) & \text{if~~} \sigma_i=r\in\Lambda_n^i\\
\end{cases}
$$
for all $a_1,a_2\in A$ and $b\in B$.
Finally, for $i=n$ we have
$$
d_n^{\sigma}\left(\otimes\begin{pmatrix}
a_0 & b_{0,1} & \cdots & b_{0,n-1} & b_{0,n}\\
  & a_1 & \cdots & b_{1,n-1} & b_{1,n}\\
  &   & \ddots & \vdots & \vdots\\
  &   &  & a_{n-1} & b_{n-1,n}\\
  &   &  &   & a_{n}\\
\end{pmatrix}\right)=
$$
$$
\otimes\begin{pmatrix}
\sigma_0(a_n\otimes a_0\otimes b_{0,n}) & \sigma_1(b_{0,1}\otimes b_{1,n}) & \cdots & \sigma_{n-2}(b_{0,n-2}\otimes b_{n-2,n}) & \sigma_{n-1}(b_{0,n-1}\otimes b_{n-1,n})\\
  & a_1 & \cdots & b_{1,n-2} & b_{1,n-1}\\
  &  & \ddots & \vdots & \vdots\\
  &  &  & a_{n-2} & b_{n-2,n-1}\\
  &  &  &   & a_{n-1}\\
\end{pmatrix}.
$$
\end{example}

\begin{remark} We denote the homology of $\Theta(\mathcal{E}(A,B,\varepsilon))$ by $HH_{\bullet}(A,B,\varepsilon)$ and call it the secondary homology of the triple $(A,B, \varepsilon)$. 
Notice that  if $B$ is commutative and $\varepsilon(B)\subseteq \mathcal{Z}(A)$, we recover the usual  secondary Hochschild homology $HH_*(A,B,\varepsilon)$ as defined in \cite{jma}. 
\end{remark}

\begin{example}
Take $B$ to be a commutative $k$-algebra, $A$ to be a $k$-algebra, $\varepsilon:B\to A$ to be a morphism of $k$-algebras such that $\varepsilon(B)\subseteq \mathcal{Z}(A)$, and $\iota:M_l(B)\to M_l(A)$ to be the induced $k$-algebra morphism. Take $e\in M_l(A)$, and  $f\in M_l(B)$ such that $e^2=e$, $f^2=f$, and $ef=fe=e$. Consider the element 
$$T_{n}(e,f)=\otimes\begin{pmatrix}
e & f & \cdots & f & f\\
  & e & \cdots & f & f\\
  &  & \ddots & \vdots & \vdots\\
  &  &  & e & f\\
  &  &  &   & e\\
\end{pmatrix} \in E_{n}(M_l(A),M_l(B), \iota).
$$
Notice that $d_i^{\alpha}(T_n(e,f))=T_{n-1}(e,f)$. This means that if we define $C_n=kT_n(e,f)$, we get a $\lambda$-subcomplex. In particular $T_n(e,f)\in \Theta(\mathcal{E}(M_l(A),M_l(B), \iota))$. Moreover $\partial_{2n}(T_{2n}(e,f))=0$ and so we get an element $\overline{T_{2n}(e,f)}\in H_{2n}(M_l(A),M_l(B), \iota)$. 
\end{example}


\section{Back to the Hochschild Homology} 
In this section $A$ is a commutative $k$-algebra, and $M$ is a symmetric $A$-bimodule. For the matrix algebra $M_l(A)$ we have two possible different ways of defining Hochschild homology. We have the classical $H_n(M_l(A),M_l(M))$ (as in the preliminary section), and $H^{S^1}_n(M_l(A),M_l(M))$ (as in the previous section). We will show that the two constructions agree.

We recall the simplicial structure on $S^1$. Take $X_0=\{*_0\}$, and $X_n=\{*_n\}\cup\{I^a_b\; \vert \; a+b+1=n\}$ with 
\begin{eqnarray}
d_i(*_{n})=*_{n-1},\label{A1}
\end{eqnarray} 
\begin{eqnarray}
d_i(I^a_b)= \left\{\begin{array}{ll}
  *_{a+b}& \mbox{ if $a=0$ and $i=0$}\\ 
  I^{a-1}_b & \mbox{ if  $a\neq 0$ and $i\leq a$ }\\
 I^a_{b-1}& \mbox{ if $b\neq 0$ and $i>a$}\\
 *_{a+b}& \mbox{ if $b=0$ and $i=n=a+1$,}
 \end{array}\right.\label{A2}
\end{eqnarray}
\begin{eqnarray}
s_i(*_{n})=*_{n+1},\label{B1}
\end{eqnarray} 
\begin{eqnarray}
s_i(I^a_b)= \left\{\begin{array}{ll}
  I^{a+1}_b & \mbox{ if   $i\leq a$ }\\
 I^a_{b+1}& \mbox{ if  $i>a$.}
 \end{array}\right.\label{B2}
\end{eqnarray}
Next we give the details for the $\Delta$-indexing set $\mathbf{\Lambda}^{S^1}$, and the $\mathbf{\Lambda}^{S^1}$-system $\mathcal{C}^{S^1}(M_l(A),M_l(M))$  as described in Example \ref{example3}. 

One can see that  $\vert {\mathbf{\Lambda}}_n^i\vert =2$, so we can identify ${\mathbf{\Lambda}}_n^i$ with the set $\{1,\tau\}$. For all $n\in \mathbb{N}$ we have  $$C^{S^1}_n(M_l(A),M_l(M))=M_l(M)\otimes M_l(A)^{\otimes n}.$$
For $0\leq i\leq n$ and $\alpha\in \mathbf{\Lambda}^i_n$, we have $\delta_i^{\alpha}:C^{S^1}_n(M_l(A),M_l(M))\to C^{S^1}_{n-1}(M_l(A),M_l(M))$ determined by
\begin{eqnarray}
\delta_i^1(x_0\otimes x_1\otimes ...\otimes x_n)= \left\{\begin{array}{ll}
  x_0 x_1\otimes x_2\otimes ...\otimes x_n& \mbox{ if $i=0$},\\ 
  x_0\otimes ...\otimes x_{i-1}\otimes x_ix_{i+1}\otimes x_{i+2}\otimes ...\otimes x_n & \mbox{ if  $1\leq i\leq n-1$},\\
 x_nx_0\otimes x_1\otimes ...\otimes x_{n-1} & \mbox{ if  $i= n$},\\
 \end{array}\right.\label{D1}
\end{eqnarray}
and
\begin{eqnarray}
\delta_i^{\tau}(x_0\otimes x_1\otimes ...\otimes x_n)= \left\{\begin{array}{ll}
  x_1 x_0\otimes x_2\otimes ...\otimes x_n& \mbox{ if $i=0$},\\ 
  x_0\otimes ...\otimes x_{i-1}\otimes x_{i+1}x_{i}\otimes x_{i+2}\otimes ...\otimes x_n & \mbox{ if  $1\leq i\leq n-1$},\\
 x_0x_n\otimes x_1\otimes ...\otimes x_{n-1} & \mbox{ if  $i= n$}.\\
 \end{array}\right.\label{Dt}
\end{eqnarray}

We are now ready to state the following result. 

\begin{proposition} Let $A$ be a commutative $k$-algebra and $M$ a symmetric $A$-bimodule. Then we have
$$H^{S^1}_n(A,M)\simeq H_n(A,M)\simeq H_n(M_l(A),M_l(M))\simeq H_n^{S^1}(M_l(A),M_l(M)).$$
\end{proposition}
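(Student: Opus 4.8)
The proposition chains together four homology groups, so the plan is to establish each of the three isomorphisms in turn, observing that the first and third are essentially bookkeeping and the middle one is the genuine mathematical content.

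First I would dispense with the outer two isomorphisms. For $H^{S^1}_n(A,M)\simeq H_n(A,M)$, the point is that when $A$ is commutative and $M$ is $A$-symmetric, the $\mathbf{\Lambda}^{S^1}$-system collapses: the two maps $\delta_i^1$ and $\delta_i^\tau$ in \eqref{D1} and \eqref{Dt} coincide (since $x_ix_{i+1}=x_{i+1}x_i$, etc.), so every $M\otimes A^{\otimes n}$ already satisfies condition (i) of Definition~\ref{l-def}, forcing $\Theta(\mathcal{C}^{S^1}(A,M))_n=C_n(A,M)$ in each degree, and the common restriction $\delta_i^A$ is exactly the Hochschild face map $d_i$ from \eqref{C2}. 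Thus the maximal $\lambda$-subcomplex \emph{is} the Hochschild complex, and the two homologies agree on the nose. This is really just unwinding Remark~\ref{remark8} in the special case $\mathbf{X}=S^1$. The isomorphism $H_n(A,M)\simeq H_n(M_l(A),M_l(M))$ is classical Morita invariance of Hochschild homology (a generalized-trace/inclusion argument), which I would cite rather than reprove, since it lies outside the new machinery of this paper.

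The crux is the middle isomorphism $H_n(M_l(A),M_l(M))\simeq H^{S^1}_n(M_l(A),M_l(M))$, where now we are comparing the classical Hochschild complex of the noncommutative algebra $M_l(A)$ with the $\Lambda$-homology produced by our construction. The main obstacle is that $M_l(A)$ is genuinely noncommutative, so $\delta_i^1\neq\delta_i^\tau$ and the maximal $\lambda$-subcomplex $\Theta(\mathcal{C}^{S^1}(M_l(A),M_l(M)))_n$ is a proper subspace of $M_l(M)\otimes M_l(A)^{\otimes n}$ cut out by the conditions $\delta_i^1=\delta_i^\tau$ and their compatibility with the face relations. The heart of the argument is therefore to identify this subspace explicitly. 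My plan is to show that an element lies in $\Theta_n$ precisely when, after the standard identification $M_l(M)\otimes M_l(A)^{\otimes n}\cong M\otimes M_l(k)^{\otimes(n+1)}\otimes A^{\otimes n}$ via matrix indices, the matrix-coordinate part is forced (by the requirement $\delta_i^1=\delta_i^\tau$ on the subcomplex) to match up so that the surviving tensors are exactly those recording a cyclic composition of matrix units $E_{p_0p_1}\otimes E_{p_1p_2}\otimes\cdots\otimes E_{p_np_0}$. In other words, $\delta_i^1=\delta_i^\tau$ holds on $x_0\otimes\cdots\otimes x_n$ exactly when consecutive matrix factors are ``composable'' so that transposing them leaves the product unchanged, and this composability condition is stable under all the face maps, giving an honest subcomplex.

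Once $\Theta_\bullet$ is identified with the span of these cyclically-composable matrix-unit tensors, I would construct an explicit chain map to the Hochschild complex $C_\bullet(M_l(A),M_l(M))$ and check it is a quasi-isomorphism. The natural candidate is the generalized trace map together with its section given by inclusion along the $(1,1)$-corner; the cyclic-composability of the basis elements in $\Theta_\bullet$ is precisely what makes the trace well-defined and what lets the face maps $\delta_i^A$ agree with the Hochschild differentials \eqref{C2} up to the multiplication of the matrix-coefficient algebra. I expect the verification that the trace and inclusion are mutually inverse \emph{on homology} (not on the nose) to reduce to the same homotopy used in classical Morita invariance, so that in fact all three intermediate complexes are related by the standard trace/inclusion homotopies, and the composite isomorphism in the proposition follows by transitivity. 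The step I anticipate being delicate is the explicit description of $\Theta_\bullet$ for $M_l(A)$: proving that the maximal $\lambda$-subcomplex is \emph{exactly} the cyclically-composable span, with no extra elements surviving, requires checking both that these elements satisfy conditions (i)--(iii) of Definition~\ref{l-def} and that any element outside their span fails condition~(i) in some degree.
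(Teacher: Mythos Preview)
Your treatment of the first two isomorphisms matches the paper's: both are cited rather than reproved, the first because for commutative $A$ and symmetric $M$ the maps $\delta_i^1$ and $\delta_i^\tau$ in \eqref{D1}--\eqref{Dt} coincide so that $\Theta(\mathcal{C}^{S^1}(A,M))$ is the full Hochschild complex, and the second being classical Morita invariance via the corner inclusion $\iota_2$.

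For the third isomorphism you diverge sharply from the paper, and your route has a genuine gap. The paper makes \emph{no} attempt to describe $\Theta(\mathcal{C}^{S^1}(M_l(A),M_l(M)))$ explicitly. Instead it uses the $\lambda$-morphism machinery (Definition~\ref{def81} and Lemma~\ref{lemma10}) to factor the Morita inclusion through $\Theta$. The corner inclusion $A\to M_l(A)$, $M\to M_l(M)$ is a $\lambda$-morphism from the trivial $\mathbf{\Upsilon}$-system $\mathcal{C}(A,M)$ to the $\mathbf{\Lambda}^{S^1}$-system $\mathcal{C}^{S^1}(M_l(A),M_l(M))$ (here commutativity of $A$ is what makes Definition~\ref{def81} hold for both $\alpha=1$ and $\alpha=\tau$), so Lemma~\ref{lemma10} produces a pre-simplicial map $\iota_0:C_\bullet(A,M)\to\Theta_\bullet$. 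Likewise the identity is a $\lambda$-morphism $\mathcal{C}^{S^1}(M_l(A),M_l(M))\to\mathcal{C}(M_l(A),M_l(M))$ (target viewed over the trivial indexing set), giving $\iota_1:\Theta_\bullet\to C_\bullet(M_l(A),M_l(M))$. Then $\iota_2=\iota_1\iota_0$, and the paper concludes from $\iota_2$ being a quasi-isomorphism that $\iota_1$ is one as well. The whole point of Lemma~\ref{lemma10} is to sandwich $\Theta$ between two complexes one already understands, so that no internal analysis of $\Theta$ is needed.

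Your proposed explicit description of $\Theta_\bullet$ as the span of cyclically composable matrix-unit tensors $mE_{p_0p_1}\otimes a_1E_{p_1p_2}\otimes\cdots\otimes a_nE_{p_np_0}$ is not correct, so the plan would fail at exactly the step you flagged as delicate. Already for $n=1$ with $p_0\neq p_1$ the tensor $mE_{p_0p_1}\otimes aE_{p_1p_0}$ violates condition~(i) of Definition~\ref{l-def}: $\delta_0^1$ sends it to $ma\,E_{p_0p_0}$ while $\delta_0^\tau$ sends it to $am\,E_{p_1p_1}$, and these are distinct elements of $M_l(M)$. So individual composable tensors do not lie in $\Theta_1$. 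Conversely, $mE_{11}\otimes aE_{22}$ \emph{does} lie in $\Theta_1$ (both products vanish, so $\delta_i^1=\delta_i^\tau=0$), yet it is not of the cyclically composable form. The subspace $\Theta_\bullet$ is cut out by symmetry-type linear conditions that do not coincide with your composability filtration; salvaging an explicit description would take considerably more work, and the paper's argument shows it is unnecessary.
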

\begin{proof}
Since $A$ is commutative and $M$ is symmetric, the first isomorphism  is known from \cite{p}. Also, it is well known  from \cite{lo} that the maps $i_A:A\to M_l(A)$ and $i_M:M\to M_l(M)$ determined by
$$x\to \begin{pmatrix} 
x & 0 & \cdots & 0\\
0& 0 & \cdots & 0\\
\vdots& \vdots & \cdots & \vdots\\
0&0 & \cdots & 0
\end{pmatrix}$$
can be extended to a morphism of pre-simplicial modules 
$$\iota_2: (C_n(A, M),d_i) \to (C_n(M_l(A), M_l(M)),d_i),$$ 
which induced an isomorphism at the level of homology. Thus Hochschild homology is Morita invariant, that is $H_n(A,M)\simeq H_n(M_l(A),M_l(M))$. 
 
Take $\mathbf{\Upsilon}$ the trivial  $\Delta$-indexing set (i.e. $\vert\Upsilon_n^i\vert =1$ for all $n$ and all $0\leq i\leq n$). Then every pre-simplicial module is a $\mathbf{\Upsilon}$-system. In particular $\mathcal{C}(A,M)$ and $\mathcal{C}(M_l(A),M_l(M))$ are the ${\bf \Upsilon}$-systems associated to $(C_n(A, M),d_i)$ and $(C_n(M_l(A), M_l(M)),d_i)$, respectively. 

Since $A$ is commutative and $M$ is $A$-symmetric, the maps $i_A:A\to M_l(A)$ and $i_M:M\to M_l(M)$  induce  a $\lambda$-morphism  $\mathcal{C}(A,M)\to \mathcal{C}^{S^1}(M_l(A),M_l(M))$ (as in Definition \ref{def81}). By Lemma \ref{lemma10} we obtain a morphism of pre-simplicial modules 
$$\iota_0:(C_n(A,M), d_i)\to (\Theta(\mathcal{C}^{S^1}(M_l(A),M_l(M)))_n, \delta_i).$$
Also, it easy to check that the identity map $\mathcal{C}^{S^1}(M_l(A),M_l(M)) \to \mathcal{C}((M_l(A),M_l(M)))$ is a $\lambda$-morphism  (as in Definition \ref{def81}). Again by Lemma \ref{lemma10} this gives a morphism of pre-simplicial $k$-modules 
$$\iota_1: (\Theta(\mathcal{C}^{S^1}(M_l(A),M_l(M)))_n, \delta_i) \to (C_n((M_l(A),M_l(M))), d_i).$$
Finally, we have that $\iota_2=\iota_1\iota_0$, and since $\iota_2$ induces an isomorphism in homology we get that $\iota_1$ will also induce an isomorphism $H_n^{S^1}(M_l(A),M_l(M))\simeq H_n(M_l(A),M_l(M))$, which finishes the proof. 
\end{proof}

\section{Final Remarks}

The setting in Theorem \ref{max} is quite general, and if applied to poorly chosen ${\mathbf \Lambda}$-systems, the theorem is not likely to give interesting results. One has to balance between $\Delta$-indexing sets that are too big or too small. 

The results from the previous section show that when $\mathbf{X}_\bullet$ is modeled by $S^1$, our construction of higher order Hochschild homology for noncommutative algebras behaves as one would hope. However, the proof depends heavily on the already known existence and properties of Hochschild homology for noncommutative algebras. 

If $A$ is a commutative $k$-algebra, $M$ a symmetric $A$-bimodule, and $\mathbf{X}_{\bullet}$ a simplicial set one can show that we have a morphism $H_n^{\bf X_{\bullet}}(A,M)\to H_n^{\bf X_{\bullet}}(M_l(A),M_l(M))$. It would be interesting  to prove that this morphism is actually an isomorphism (i.e we have  Morita invariance). 

One can easily check the functoriality of $H^{X_{\bullet}}(A,M)$. It would be interesting to see if the construction   of $H_{\bullet}(\mathcal{M}^{\mathbf{X}}(A,M))$ is invariant under the homotopy equivalence of the simplicial set $\mathbf{X}$. Notice that we did not use the degeneracy maps of the simplicial set $\mathbf{X}$, but that information could be easily incorporated in some variation of Theorem  \ref{max} (that would deal with maximal simplicial modules instead of maximal pre-simplicial modules). 

Similar constructions can be done if one wants to define higher order Hochschild cohomology, or for the generalized higher Hochschild (co)homology  (see \cite{bm} or \cite{hhl}).

\bibliographystyle{amsalpha}

\begin{thebibliography}{A}


\bibitem
{b}
B. R. Corrigan-Salter, \textit{Higher Order Hochschild (Co)homology of Noncommutative Algebras},
Bull. Belg. Math. Soc. Simon Stevin, {\bf 25} (2018), 741--754.


\bibitem
{bm}
B. R. Corrigan-Salter, and M. D. Staic, \textit{ Higher Order and Secondary Hochschild Cohomology},  C. R. Math. Acad. Sci. Paris, {\bf 354} (2016), no. 11, 1049--1054. 


\bibitem
{g1}
M. Gerstenhaber, \textit{The Cohomology Structure of an Associative Ring}, Ann. of Math. (2) {\bf 78} (1963), 267--288.

\bibitem
{g2}
M. Gerstenhaber, \textit{On the Deformation of Rings and Algebras}, Ann. of Math. (2)
{\bf 79} (1964), 57--103.



\bibitem
{gi}
G. Ginot, \textit{Higher order Hochschild Cohomology}, C. R. Math. Acad. Sci. Paris, {\bf 346} (2008), 5--10.


\bibitem
{gi2}
G. Ginot, \textit{Hodge filtration and operations in higher Hochschild (co)homology and
applications to higher string topology}, in Algebraic topology,  Springer International Publishing, Lecture Notes in Mathematics, {\bf 2194} (2017), 1--104.


\bibitem
{gtz}
G. Ginot, T. Tradler, M. Zeinalian, \textit{Higher Hochschild homology, topological chiral homology and factorization algebras}, Commun. Math. Phys. {\bf 326} (2014), 635–-686.

\bibitem
{hhl}
G. Halliwell, E. Honing, A. Lindenstrauss, B. Richter, and I. Zakharevich, \textit{Relative Loday constructions and applications to higher THH-calculations},  Topology Appl., {\bf 235} (2018), 523–-545.


\bibitem
{h} G. Hochschild, \textit{On the Cohomology Groups of an Associative Algebra}, Ann. of Math. (2) {\bf 46} (1945), 58--67.



\bibitem
{l2}
J. Laubacher, \textit{Secondary Hochschild and Cyclic (Co)homologies},  Ph.D thesis, (2017).

\bibitem
{jma}
J. Laubacher, M. D. Staic, and A. Stancu, \textit{Bar simplicial modules and secondary
cyclic (co)homology}. J. Noncommut. Geom., {\bf 12} (2018),  865--887.


\bibitem
{lo} J. L. Loday, \textit{Cyclic Homology}, Springer-Verlag, Grundlehren der mathematischen Wissenschaften, {\bf 301} (1992).


\bibitem
{p} T. Pirashvili, \textit{Hodge decomposition for higher order Hochschild homology}, Ann. Sci. Ecole Norm. Sup., (4) {\bf 33} (2000), 151--179.



\bibitem
{sta} M. D. Staic, \textit{Secondary Hochschild Cohomology}, Algebras and Representation Theory,  {\bf 19} Issue 1 (2016), pp 47--56.



\end{thebibliography}

\end{document}